\colorlet{LightGray}{White!90!Periwinkle}
\colorlet{LightOrange}{Orange!15}
\colorlet{LightGreen}{Green!15}
\colorlet{LightYellow}{Yellow!15}
\colorlet{LightBlue}{Blue!15}
\declaretheoremstyle[name=Theorem,]{thmsty}
\declaretheorem[style=thmsty,numberwithin=section]{theorem}
\declaretheoremstyle[name=Proposition,]{prosty}
\declaretheorem[style=prosty,numberlike=theorem]{proposition}
\declaretheoremstyle[name=Corollary,]{prosty}
\declaretheoremstyle[name=Lemma,]{prosty}
\declaretheorem[style=prosty,numberlike=theorem]{lemma}
\declaretheoremstyle[name=Definition,]{prosty}
\newcommand{\R}{\mathbb{R}}
\newcommand{\Z}{\mathbb{Z}}
\definecolor{darkblue}{rgb}{0,0,0.7}
\newcommand{\crr}[1]{{\color[rgb]{1,0,0} #1}} % red
\newcommand{\al}{\alpha}
\newcommand{\de}{\delta}
\newcommand{\e}{\epsilon}
\newcommand{\ga}{{\gamma}}
\newcommand{\la}{\lambda}
\newcommand{\La}{\Lambda}
\newcommand{\si}{\sigma}
\renewcommand{\th}{\theta}
\newcommand{\I}{\infty}
\title{Regularity, uniqueness and the relative size of small and large scales in SQG flows}
\author{Z. Akridge and Z. Bradshaw} 
\begin{document}
\maketitle

\begin{abstract}  
The problem of regularity and uniqueness are open for the supercritically dissipative surface quasi-geostrophic equations in certain classes. In this note we examine the extent to which small or large scales are necessarily active both for the temperature in a hypothetical blow-up scenario and for the error in hypothetical non-uniqueness scenarios, the latter understood within the class of Marchand's solutions. This extends prior work for the 3D Navier-Stokes equations. The extension is complicated by the fact that mild solution techniques are unavailable for supercritical SQG. This forces us to develop a new approach using energy methods and Littlewood-Paley theory. 
\end{abstract}

\section{Introduction}

We define the $\al$-dissipative surface quasi-geostrophic  equation to be 
\begin{equation}\label{eq.SQG}\tag{SQG$_\al$}
    \partial_t\theta + u\cdot \nabla \theta + (-\Delta)^{\alpha}\theta = 0;
    \quad u = (-\mathcal{R}_{x_2}\theta,\mathcal{R}_{x_1}\theta),
\end{equation}
where $\theta:\R\cross \R^2\to \R$ and $\mathcal{R}_{x_n}$ is the Riesz transform with respect to the variable $x_n$ and $\al \in [0,1]$ is fixed. In certain instances, these equations have physical relevance \cite{held1995surface,lapeyre2017surface}. The are  more generally useful as a toy model for regularity questions for the Euler and Navier-Stokes equations. 

When $\alpha\in (1/2,1]$ the problem is sub-critical and global regularity is known. When $\alpha=1/2$ the problem is critical, in which case global regularity is  also known, but is considerably harder to achieve. 
A selection of results addressing these cases are \cite{ConstantinCordobaGancedoWu,ConstantinMajdaTabak,ConstTarfVicolCMP,VariationsThemeCaffVass2009,KiselevNazarovVolbergInventiones2007,ConstWu,ConstVicolGAFA}.
When $0<\alpha<1/2$ global regularity is, in general, an open problem. The same can be said of uniqueness of weak solutions, although certain classes of \textit{very} weak solutions are known to exhibit non-uniqueness \cite{BSV}, see also \cite{chengkwon2020non,novackquasigeostrophic}.

In this paper we consider $\alpha\in (0,1/2)$, which is the supercritical range for the dissipative problem. It is supercritical in the sense that quantities which have a maximum principle are not known to imply global regularity or well-posedness. Our present goal is to analyze sufficient conditions for regularity and uniqueness  which are based on the relative sizes of small and large scales, interpreted through a Littlewood-Paley lens. This analysis is motivated by past work for the  3D Navier-Stokes equations by Albritton and Bradshaw \cite{AB2} and Bradshaw \cite{B1}. The technical details are, however, substantially different. In particular, both \cite{AB2} and \cite{B1} primarily use mild solution estimates. These are known to break down in the supercritical SQG regime and we therefore need to develop alternative estimates based on energy methods. In addition to shedding new light on the dynamics of hypothetical SQG blow-up and non-uniqueness, the results of this paper  demonstrate that the ideas in \cite{AB2,B1}  can be adapted to PDEs with different qualitative properties compared to the 3D Navier-Stokes equations.

\subsection{Regularity criteria}

SQG has a natural scaling: If $\th$ is a solution to \eqref{eq.SQG}, then so is $\th_\la := \la^{2\al-1}\th(\la x,\la^{2\al}t)$. We therefore see that the Sobolev space $\dot H^{2-2\al}$ is a scaling invariant space. We will refer to such spaces as \textit{critical}. Roughly speaking, local well-posedness is known in $\dot H^s$ for $s>2-2\al$ with partial results if $s=2-2\al$, see \cite{Vincent} for a comprehensive review of results in this direction, while global existence of weak solutions, in the sense of Marchand, is known for $-4/3<s<2-2\al$ \cite{marchand}.

Our first regularity result should be viewed through the lens of subcritical local well-posedness theories. 
Let us recall Ju's local well-posedness theorem in Sobolev spaces \cite{Ju}: 
If $s>2-\alpha$ where $\alpha\in (0,1/2)$ and $\theta_0\in \dot H^{s}\cap L^2$, then there exists $T=T(\|u_0\|_{\dot H^s})>0$ and a unique strong solution $\theta$ to \eqref{eq.SQG} so that 
\[
\th\in L^\I(0,T;H^s )\cap L^2(0,T;H^{s+\al}),
\]
with
\[
\| \th (t)\|_{\dot H^{s}} \leq 2\| \th_0\|_{\dot H^{s}},
\]
provided $t\in (0,T_{min})$ where
\[
T_{min} \| \th_0\|_{\dot H^s}^{\frac {2\al} {s-2+2\al}} = C_0(s,\al),
\]
and $C_0(s,\al)$ is a universal constant depending on $s$ and $\al$ and $T_{min} < T$. To confirm this in \cite{Ju}, refer to the inequality below \cite[(3.5)]{Ju}. We will view $T_{min}$ as the \textit{minimum guaranteed time of existence}.  

 Our first regularity result says that, if $\|\th_0\|_{\dot H^s}$ is held fixed but activity is concentrated on high modes, then Ju's existence time can be extended. 

\begin{theorem}[Local well-posedness up to a prescribed time]\label{thrm.reg1}
    Let $s\in (2-2\al , 2- \al )$ and assume $\th_0\in L^2\cap \dot H^s$. 
    Let $T_{*}>T_{min}$ be given.  
    Then, there exists a constant $C=C(s,\al)$ so that, for $J=J(t)$   satisfying  
 \[
2^{2\al J} \geq C \|\th_0\|_{\dot H^s}^{1 + \frac {4\al} {s-2+2\al}} T_*^{1+\frac {s-2+s\al}{2\al}},
\]
 if
  \begin{align} \label{freqSp}
  \frac {\|  \Delta_{<J} \th_0 \|_{\dot H^s}} {\| \Delta_{\geq J} \th_0\|_{\dot H^s}} \leq \frac \ga {4 C_b},
\end{align}
    where $C_b$ is a universal constant and 
    \[
     \ga = \bigg(  \frac {T_{min}} {T_*} \bigg)^{\frac {s-2+2\al} {2\al}}       ,
    \]
    then $\th$ can be continued smoothly up to time $T_*$ and satisfies 
    \[
    \sup_{0<t<T_*}\| \th (t)\|_{\dot H^s}\leq 2 \|  \th_0\|_{\dot H^s}. 
    \] 
\end{theorem}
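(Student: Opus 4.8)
\subsection*{Proof proposal}

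The statement is really an a priori estimate plus a continuation argument: take $\th$ to be the local strong solution with data $\th_0$ (its short-time existence being standard, e.g.\ as a regular limit of Marchand solutions), write $M:=\|\th_0\|_{\dot H^s}$, and let $[0,T^*)$ be the maximal subinterval of $[0,T_*)$ on which $\|\th(t)\|_{\dot H^s}\le 2M$. It suffices to show that under the stated hypotheses this bound self-improves to $\|\th(t)\|_{\dot H^s}\le\tfrac32 M$ on $[0,T^*)$; a standard continuation criterion then forces $T^*=T_*$ and smoothness on $[0,T_*)$.

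The engine is the $\dot H^s$ energy balance
\[
\tfrac12\tfrac{d}{dt}\|\th\|_{\dot H^s}^2+\|\th\|_{\dot H^{s+\al}}^2=-\int_{\R^2}\Lambda^s(u\cdot\nabla\th)\,\Lambda^s\th\,dx .
\]
Using $\nabla\cdot u=0$ to pass to a commutator and $u=\cR^\perp\th$ to trade Riesz transforms, a Kato--Ponce type estimate bounds the right side by $C_b\,\|\Lambda^{1-\al}\th\|_{L^\infty}\,\|\th\|_{\dot H^{s+\al}}\,\|\th\|_{\dot H^s}$ (or a $\dot B^{1-\al}_{\infty,\infty}$ variant thereof); this is the SQG substitute for Ju's $\|\nabla\th\|_{L^\infty}\|\th\|_{\dot H^s}^2$, the exponent $1-\al$ being the scaling-critical one that leaves exactly $\al$ derivatives to pair with the dissipation, and $C_b$ the constant appearing in \eqref{freqSp}. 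The decisive point is that $\|\Lambda^{1-\al}\th\|_{L^\infty}$, unlike $\|\nabla\th\|_{L^\infty}$, is summable off the high modes precisely in our range: by Bernstein in $\R^2$,
\[
\|\Lambda^{1-\al}\Delta_{\ge J}\th\|_{L^\infty}\lesssim\sum_{j\ge J}2^{(2-s-2\al)j}\big(2^{(s+\al)j}\|\Delta_j\th\|_{L^2}\big)\lesssim 2^{-(s-2+2\al)J}\,\|\th\|_{\dot H^{s+\al}},
\]
where convergence uses $s>2-2\al$ (exactly where Ju's threshold $s>2-\al$ is relaxed), and the prefactor $2^{-(s-2+2\al)J}$ is small as soon as $2^{2\al J}$ is large in terms of $M$. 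After Cauchy--Schwarz and Young, the high-frequency part of the nonlinearity is therefore absorbed by $\|\th\|_{\dot H^{s+\al}}^2$.

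What remains is the low-frequency contribution $\|\Lambda^{1-\al}\Delta_{<J}\th\|_{L^\infty}$, which is the crux: here Bernstein produces a \emph{large} factor $2^{(2-s-\al)J}$, so it cannot be weighed against $\|\th\|_{\dot H^s}$; instead we must exploit that $\Delta_{<J}\th$ is small, starting from $\|\Delta_{<J}\th_0\|_{\dot H^s}\le\frac{\gamma}{4C_b}\|\Delta_{\ge J}\th_0\|_{\dot H^s}\le\frac{\gamma}{4C_b}M$ by \eqref{freqSp}. The plan is to close a coupled system of differential inequalities for $x(t):=\|\Delta_{<J}\th(t)\|_{\dot H^s}^2$ and $y(t):=\|\Delta_{\ge J}\th(t)\|_{\dot H^s}^2$: applying $\Delta_{<J}$, resp.\ $\Delta_{\ge J}$, to \eqref{eq.SQG} and repeating the above, $y$ enjoys the strong damping $\|\Delta_{\ge J}\th\|_{\dot H^{s+\al}}^2\ge 2^{2\al J}y$, while $x$ is forced only by (i) low$\times$low interactions, which are quadratically small in $x$, and (ii) the low-frequency output of high$\times$high interactions, which is tamed by the damping of $y$ and the gain of the projection $\Delta_{<J}$. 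Tracking $x$ on $[0,T_*]$ then reduces to the subcritical existence-time bookkeeping underlying Ju's theorem, now applied to data of size $\sim\gamma M$, and the choice $\gamma=(T_{min}/T_*)^{(s-2+2\al)/(2\al)}$ is calibrated precisely so that $x$ stays within budget (equivalently, so that $2^{(2-s-\al)J}\sqrt{x(t)}$ remains integrable against the dissipation up to $T_*$); this is also what dictates the stated lower bound on $2^{2\al J}$, the $M$-power coming from the high-mode absorption and the $T_*$-power from the low-mode budget. Feeding these bounds into the full energy balance gives $\tfrac{d}{dt}\|\th\|_{\dot H^s}^2\le 0$ up to terms with time integral $\le\tfrac14M^2$ over $[0,T_*]$, i.e.\ $\|\th(t)\|_{\dot H^s}\le\tfrac32M$, closing the bootstrap.

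The main obstacle is step (ii) together with the calibration of constants: balancing the low-frequency forcing generated by high$\times$high interactions against the high-mode dissipation $2^{2\al J}$ and the budget $\gamma$ over the \emph{long} interval $[0,T_*]$ rather than the short Ju interval $[0,T_{min}]$, and checking that all the constraints this imposes on $J$ follow from the single displayed lower bound on $2^{2\al J}$. Everything else --- the commutator estimate, the Bernstein bookkeeping, and the continuation criterion --- is routine; the one genuinely new ingredient is that the combination ``$s>2-2\al$'' and ``high-frequency concentration of the data'' conspires to make an energy argument succeed where the mild-solution/subcritical machinery of \cite{AB2,B1} and \cite{Ju} is unavailable.
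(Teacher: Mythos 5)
There is a genuine gap, and it sits exactly where you flag ``the main obstacle'': the propagation of low-frequency smallness over the \emph{whole} interval $[0,T_*]$. Your scheme needs $\|\Delta_{<J}\th(t)\|_{\dot H^s}$ to stay small enough, uniformly in $t\in[0,T_*]$, to beat the large Bernstein factor $2^{(2-s-\al)J}$ coming from $\|\Lambda^{1-\al}\Delta_{<J}\th\|_{L^\infty}$; after Young this means a smallness threshold of order $2^{-(2-s-\al)J}T_*^{-1/2}$, which \emph{shrinks} as $J$ grows. But the hypothesis \eqref{freqSp} constrains only the data, and the low modes are subsequently forced by high$\times$high interactions of size comparable to $\|\th\|_{\dot H^s}^2\sim M^2$ (with nonnegative powers of $2^J$ after the $\Delta_{<J}$ projection), acting over the long time $T_*$. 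Nothing in your coupled system for $x=\|\Delta_{<J}\th\|_{\dot H^s}^2$, $y=\|\Delta_{\ge J}\th\|_{\dot H^s}^2$ prevents $x$ from leaving the required (ever more stringent) budget well before $T_*$: the damping of $y$ is irrelevant to the size of the transfer term, and the assertion that tracking $x$ ``reduces to Ju's bookkeeping applied to data of size $\sim\gamma M$'' conflates the low-mode piece with the full solution---Ju's bookkeeping is governed by the full norm, which is of size $M$, not $\gamma M$. As written, the bootstrap does not close, and the single stated lower bound on $2^{2\al J}$ cannot be shown to imply the constraints your step (ii) would need.

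The paper's proof avoids this entirely by never trying to control the solution directly on $[0,T_*]$. Proposition \ref{PROP} works on a \emph{short} window $t\sim(\ga/\|\th_0\|_{\dot H^s})^{2\al/(s-2+2\al)}<T_{min}$: applying $\dot\Delta_j$ to \eqref{eq.SQG}, using Miura's commutator bound \eqref{ineq.commutator}, and integrating each block against $e^{-\la 2^{2\al j}t}$ gives a Duhamel-type representation per mode. The linear high-mode part decays by dissipation once $2^{2\al J}\gtrsim 1/(\ga t)$, the linear low-mode part is small by \eqref{freqSp}, and the nonlinear (Duhamel) contribution---including all frequency transfer---is $O(t^{(s-2+2\al)/(2\al)}\|\th_0\|_{\dot H^s}^2)$, hence small precisely because $t$ is small; so $\|\th(t)\|_{\dot H^s}\le\ga\|\th_0\|_{\dot H^s}$ at that one time. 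Theorem \ref{thrm.reg1} then follows by simply restarting Ju's local well-posedness at time $t$ with the reduced norm: since $T_{min}\|\th_0\|_{\dot H^s}^{2\al/(s-2+2\al)}=C_0$, shrinking the norm by the factor $\ga=(T_{min}/T_*)^{(s-2+2\al)/(2\al)}$ stretches the guaranteed existence time to $T_*$, and the bound $2\ga\|\th_0\|_{\dot H^s}\le 2\|\th_0\|_{\dot H^s}$ gives the stated sup bound. If you want to salvage your approach, the fix is to replace the global-in-time bootstrap by exactly this ``decay fast on a short window, then re-solve'' structure; your high-mode absorption computation and the role of $s>2-2\al$ are sound and reappear there, but the mechanism exploiting high-frequency concentration is the semigroup factor $e^{-\la 2^{2\al j}t}$ on the data over a short time, not a long-time smallness of the low modes.
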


%The assumption that $\th_0\in L^2$ is not used in the proof and is merely included to ensure the full strength of Ju's local well-posedness result applies.  

The idea behind Theorem \ref{thrm.reg1} is that a solution $\th$ of \eqref{eq.SQG} can be viewed as a linear part---this would solve the fractional heat equation---and a nonlinear part which appears with a time integral as in Duhamel's principle. The fractional heat part wants to dissipate activity at small scales. So, if the bulk of the activity is concentrated at small scales, then the linear part of the solution will decay rapidly. The nonlinear part starts at zero and grows at an algebraic rate. Taken together and assuming activity is concentrated at high enough frequencies, these facts suggest that we can find a time at which the $\dot H^s$ norm has become as small as we want to guarantee that, upon re-solving, the solution extends past $T_*$. This idea is visible in Proposition \ref{PROP}, which provides the heavy lifting for our regularity results.   Let us note that while this idea is motivated by work for 3D Navier-Stokes \cite{AB2}, its implementation here is quite different due to the fact that mild solution estimates aren't useful in the supercritical SQG range---indeed the preceding intuitive picture is slightly misleading as the Duhamel part cannot be controlled directly. This necessitated re-imagining the argument from \cite{AB2} in terms of energy inequalities, which is the main technical contribution of this paper. In \cite{AB2}, the solutions were mild so their linear and non-linear parts are built into the solution formula. It is less clear when using energy estimates how to exploit the fact that small scales are eliminated rapidly by diffusion. We are able to identify the correct mechanism by considering differential inequalities for the energy associated with individual Littlewood-Paley modes---see the proof of  Proposition \ref{PROP}.

Our next theorem considers a Type 1 blow-up scenario and establishes an upper bound on the rate at which small scales can activate. 

\begin{theorem}[Blow-up criteria]  \label{thrm.reg2}    Let $s\in (2-2\al,2-\al)$. Let $\th :\R^2\times (0,T_{max}) \to \R$ belong to $L^\I_{loc} ((0,T_{max}); \dot H^{s})\cap L^\I(0,T_{max};L^2)$ be a unique strong solution on $\R^2\times (0,T_{max})$ which is singular at $T_{max}$. Assume further that there exists $C_*$ so that
\[  \bigg(  \frac {C_0(s,\al)} {T_{max}-t}    \bigg)^{(s-2+2\al) / (2\al)} \leq \|\th(t)\|_{\dot H^s}\leq  \bigg(  \frac {C_*} {T_{max}-t}    \bigg)^{(s-2+2\al) / (2\al)}.\]
Then, there exists $J$ with $2^{2\al J}\sim (T_{max}-t)^{-1}$ and a constant $c_*>0$  so that 
\[
\inf_{0<t<T_{max}} \frac {\|  \Delta_{<J} \th (t)\|_{\dot H^s}} {\|  \Delta_{\geq J} \th (t)\|_{\dot H^s}} >c_*. 
\]
\end{theorem}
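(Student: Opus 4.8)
The plan is to read the statement as the contrapositive of Theorem~\ref{thrm.reg1}, applied to the solution restarted at each time. Write $\beta:=(s-2+2\al)/(2\al)$, which is positive since $s>2-2\al$, so that Ju's relation becomes $T_{min}\|\th_0\|_{\dot H^s}^{1/\beta}=C_0(s,\al)$ and the Type~1 hypothesis reads $(C_0(s,\al)/(T_{max}-t))^{\beta}\le\|\th(t)\|_{\dot H^s}\le(C_*/(T_{max}-t))^{\beta}$. Fix $t\in(0,T_{max})$ and regard $\th(t)$ as initial data; it lies in $L^2\cap\dot H^s$ for every such $t$, since $\th\in L^\I(0,T_{max};L^2)\cap L^\I_{loc}((0,T_{max});\dot H^s)$ and a strong solution is continuous in time with values in $\dot H^s$. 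As the target time I would take $T_*:=2(T_{max}-t)$.

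First I would verify that the hypotheses of Theorem~\ref{thrm.reg1} hold with constants uniform in $t$. Ju's minimal existence time for the datum $\th(t)$ is $T_{min}=C_0(s,\al)\|\th(t)\|_{\dot H^s}^{-1/\beta}$, so feeding in the two-sided bound gives
\[
\tfrac{C_0(s,\al)}{C_*}\,(T_{max}-t)\ \le\ T_{min}\ \le\ T_{max}-t.
\]
Hence $T_*=2(T_{max}-t)>T_{min}$, as Theorem~\ref{thrm.reg1} requires, and
\[
\ga=\Bigl(\tfrac{T_{min}}{T_*}\Bigr)^{\beta}\ \ge\ \Bigl(\tfrac{C_0(s,\al)}{2C_*}\Bigr)^{\beta}=:\ga_0>0,
\]
a constant depending only on $s$, $\al$ and $C_*$. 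I then take $J=J(t)$ to be the least integer for which the frequency-localization threshold of Theorem~\ref{thrm.reg1}, built from the datum $\th(t)$ and the target $T_*$, is met; inserting $T_*=2(T_{max}-t)$ and $\|\th(t)\|_{\dot H^s}\sim(T_{max}-t)^{-\beta}$ into that threshold, a short computation (the SQG scaling count) shows it is of order $(T_{max}-t)^{-1}$, so $2^{2\al J}\sim(T_{max}-t)^{-1}$ as asserted.

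Now suppose, towards a contradiction, that \eqref{freqSp} held at time $t$ for this $J$ and $\ga$. Then Theorem~\ref{thrm.reg1} would continue the solution with datum $\th(t)$ smoothly on $[t,t+T_*)$, with $\sup_{t<\tau<t+T_*}\|\th(\tau)\|_{\dot H^s}\le 2\|\th(t)\|_{\dot H^s}$. Since $t+T_*=2T_{max}-t>T_{max}$ and $\th$ is the \emph{unique} strong solution, this continuation coincides with $\th$ on $[t,T_{max})$, so $\th$ would extend as a strong solution beyond $T_{max}$; but the lower bound forces $\|\th(\tau)\|_{\dot H^s}\ge(C_0(s,\al)/(T_{max}-\tau))^{\beta}\to\I$ as $\tau\to T_{max}^-$, contradicting $\|\th(\tau)\|_{\dot H^s}\le 2\|\th(t)\|_{\dot H^s}<\I$. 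Therefore \eqref{freqSp} fails at $t$, that is,
\[
\frac{\|\Delta_{<J}\th(t)\|_{\dot H^s}}{\|\Delta_{\ge J}\th(t)\|_{\dot H^s}}\ >\ \frac{\ga}{4C_b}\ \ge\ \frac{\ga_0}{4C_b}.
\]
Setting $c_*:=\ga_0/(4C_b)$, which is independent of $t$, and taking the infimum over $t\in(0,T_{max})$ gives the theorem.

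The step I expect to be the crux is the uniform-in-$t$ lower bound $\ga\ge\ga_0$: the whole argument is vacuous unless $T_{min}/T_*$ stays bounded away from $0$ as $t\uparrow T_{max}$, and that is exactly what the Type~1 \emph{upper} bound supplies. (The matching lower bound in the hypothesis is in fact automatic---it is Ju's theorem applied at time $t$---and enters only to ensure that the continuation above genuinely contradicts the blow-up.) Everything else is bookkeeping: checking that $\th(t)$ is an admissible initial datum, and tracking the scaling exponents that place the relevant frequency at $2^{2\al J}\sim(T_{max}-t)^{-1}$.
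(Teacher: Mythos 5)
Your proposal is correct and is essentially the paper's argument: the paper proves Theorem \ref{thrm.reg2} in one line by taking $T_*$ comparable to $T_{max}-t$ and applying Theorem \ref{thrm.reg1} in contrapositive form, exactly as you do. Your choice $T_*=2(T_{max}-t)$ (rather than the paper's $T_{max}-t$) and your explicit uniform-in-$t$ bound $\ga\ge\ga_0$ from the Type~1 upper bound simply fill in the bookkeeping the paper leaves implicit, and correctly identify where the two-sided bound enters.
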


The lower bound stated in the theorem does not need to built into the assumptions as it is a necessary condition for blow-up.

This theorem reveals that low modes (as identified by a time dependent threshold) must be active to some extent as a hypothetical singularity develops. This is somewhat counterintuitive as high modes are known to drive singularity formation. In essence, the Type 1 condition restricts the rate at which activity can pass to very small scales, indicating complimentary scales must be active. 
There is a vast literature on conditional regularity criteria for the supercritical SQG equations, some of which is reviewed before Theorem \ref{thrm.reg3}. We feel it is presently appropriate to only mention thematically similar papers of Dai \cite{dai} (see also \cite{CS2}) and Wu \cite{Wu} which explore the role of high frequencies on one hand and sparse sets in the Fourier variable on the other.

One motivation for   Albritton and Bradshaw's work on Navier-Stokes \cite{AB2}, on which the present note is based, concerns a regularity program of Gruji\'c and coauthors \cite{Gru,BrFaGr,GrXu1,GrXu2}. We believe this program can be re-formulated through the lens of ``frequency sparseness,'' which amounts to conditions like \eqref{freqSp}.  This approach avoids the use of analyticity coupled with  $L^\I$ estimates. For the SQG problem, this avoidance seems necessary because $L^\I$ is not a sub-critical space and, consequently, the analyticity argument in \cite{Gru}, which involve the maximum modulus principle, cannot be applied directly.  Some of the later results of Gruji\'c and Xu \cite{GrXu2} relate to the generalized Navier-Stokes with dissipation below the Lions exponent $(-\Delta)^{5/4}$. We believe that the present formulation of ``frequency sparseness'' could allow these results to be adapted to the supercritical SQG equations and are curious to explore this in future work.

We conclude our discussion of regularity with a new proof of an endpoint regularity criteria. This should be compared to \cite[Theorem 3.4]{Dong1} which involves regularity criteria where the time-integral is $L^{r_0}$ and $r_0<\I$; see also \cite{ConstWu}. When $r_0=\I$, there is a corresponding statement due to Chae and Lee in $B^{2-2\al}_{2,1}$. Chae also formulated similar results in terms of $\nabla^\perp \theta$ \cite{Chae}, see also Yuan's paper \cite{Yuan}. An endpoint case is considered by Dong and Pavlovi\'c \cite{Dong2} (see also \cite{CS1}) assuming $\th \in C (0,T; \dot B^{1-2\al}_{\I,\I})$, which is close to  a result involving the time $L^\I$-norm and, in light of the version of their result which allows small jumps, can be used to prove a smallness-implies-regularity version. Because $\dot B^{2-2\al}_{2,\I}$ embeds continuously in $\dot B^{1-2\al}_{\I,\I}$, the result of Dong and Pavlovi\'c encompasses some of what follows. A novel aspect of our result compared to these others is that only scales larger than a function of $\| \th \|_{\dot H^s}(t)$ appear in the regularity criteria---a similar theme appears in \cite{BG} for the Navier-Stokes equations.
Based on this difference, as well as the fact that the approach of the proof is new compared to \cite{Dong2}, we believe it is worth including. 

\begin{theorem} \label{thrm.reg3}
    Let $\theta_0\in H^{2-2\alpha + \epsilon=:s}$ where $0<\epsilon<\alpha$. 
     Assume $\th\in C((0,T_*);\dot H^s )$ is a strong solution to \eqref{eq.SQG} with data $\th_0$ and $T_*$ is the first possible blow-up time.  Then there exists a small constant $\epsilon_*>0$ and function $J(t)$ with  
     \[ 
2^{(2-s+2\al)J(t)} = C(s,\al) \|\th(t)\|_{\dot H^s},
\] 
     so that if
    \[\sup_{0<t<T^*}\|\Delta_{<J(t)} \theta(x,t)\|_{\dot B^{2-2\alpha}_{2,\infty}}\leq \epsilon_*,\]
    then $T^*$ is not a blow-up time and the solution can be smoothly extended past  $T_*$.
\end{theorem}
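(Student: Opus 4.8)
The plan is to argue by contradiction: assume $T_*<\infty$ is a genuine blow-up time, so $\|\theta(t)\|_{\dot H^s}\to\infty$ as $t\uparrow T_*$, and deduce a differential inequality for $y(t):=\|\theta(t)\|_{\dot H^s}^2$ which is incompatible with this. The engine is the $\dot H^s$ energy identity (write $\Lambda:=(-\Delta)^{1/2}$)
\[
\tfrac12\frac{d}{dt}\|\theta\|_{\dot H^s}^2+\|\theta\|_{\dot H^{s+\al}}^2=-\int \Lambda^s(u\cdot\nabla\theta)\,\Lambda^s\theta\,dx=:\mathcal N ,
\]
which I combine with the $L^2$ maximum principle $\|\theta(t)\|_{L^2}\le\|\theta_0\|_{L^2}$ and with a Littlewood--Paley analysis of $\mathcal N$ in which every factor is split at the threshold $J=J(t)$. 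Since $2-s+2\al=4\al-\epsilon\in(3\al,4\al)$ for $0<\epsilon<\al$, the defining relation $2^{(4\al-\epsilon)J(t)}=C(s,\al)\|\theta(t)\|_{\dot H^s}$ is meaningful.

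First I would record the structural consequences of the hypothesis. Because $s=2-2\al+\epsilon$ and $\|\Delta_{<J}\theta\|_{\dot B^{2-2\al}_{2,\infty}}\le\epsilon_*$, summing $\sum_{j<J}2^{2(s-2+2\al)j}=\sum_{j<J}2^{2\epsilon j}$ gives
\[
\|\Delta_{<J}\theta\|_{\dot H^s}\ \lesssim\ \epsilon_*\,2^{\epsilon J}\ \sim\ \epsilon_*\,\|\theta\|_{\dot H^s}^{\epsilon/(4\al-\epsilon)},\qquad
\|\Delta_{\geq J}\theta\|_{\dot B^{1-2\al}_{\infty,\infty}}\ \lesssim\ 2^{-\epsilon J}\|\theta\|_{\dot H^s}.
\]
Since $\epsilon/(4\al-\epsilon)<1$, the first estimate forces the high modes to carry essentially all of the $\dot H^s$ norm once $\|\theta\|_{\dot H^s}$ is large: for $\|\theta\|_{\dot H^s}\ge M_0(s,\al,\epsilon_*)$ one has $\|\Delta_{\geq J}\theta\|_{\dot H^s}\ge\tfrac12\|\theta\|_{\dot H^s}$. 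Consequently the dissipation is strongly enhanced along that part of the trajectory,
\[
\|\theta\|_{\dot H^{s+\al}}^2\ \ge\ \|\Delta_{\geq J}\theta\|_{\dot H^{s+\al}}^2\ \gtrsim\ 2^{2\al J}\|\Delta_{\geq J}\theta\|_{\dot H^s}^2\ \gtrsim\ 2^{2\al J}\|\theta\|_{\dot H^s}^2\ \sim\ \|\theta\|_{\dot H^s}^{\,2+\frac{2\al}{4\al-\epsilon}} .
\]

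Next I would estimate $\mathcal N$. Using $\nabla\cdot u=0$ to write $\mathcal N=-\int[\Lambda^s,u\cdot\nabla]\theta\,\Lambda^s\theta\,dx$, I run Bony's paraproduct decomposition and split $u$ and $\theta$ at frequency $J$ in each term. The pieces in which the low-regularity factor --- morally $\nabla\Delta_{<J}u$ measured in $L^\infty$, equivalently $\Delta_{<J}\theta$ measured in $\dot B^{1-2\al}_{\infty,\infty}$ --- is drawn from the low block are controlled, via Bernstein, geometric summation, and the embedding $\dot B^{2-2\al}_{2,\infty}\hookrightarrow\dot B^{1-2\al}_{\infty,\infty}$, by $C_b\|\Delta_{<J}\theta\|_{\dot B^{2-2\al}_{2,\infty}}\|\theta\|_{\dot H^{s+\al}}^2\le C_b\epsilon_*\|\theta\|_{\dot H^{s+\al}}^2$, which is absorbed once $\epsilon_*\le1/(4C_b)$; the crudest remainder interactions at low output frequency are closed against $\|\theta_0\|_{L^2}$. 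In the remaining pieces the low-regularity factor lives at frequencies $\ge J$, and these are the delicate ones: there one uses the gap, converting by Bernstein the $L^\infty$-size of a block at frequency $2^m\ge2^J$ into a negative power of $2^J$ against an $\dot H^{s+\al}$- or $\dot H^s$-weight, and balances the resulting sums against the enhanced dissipation above. The precise bookkeeping is done mode by mode through the differential inequalities for the individual Littlewood--Paley energies, exactly as in the proof of Proposition~\ref{PROP} --- this is where the failure of mild-solution estimates forces the energy-method route, and matching the exponent in $J(t)$ to $2-s+2\al$ is exactly what makes these sums close with a right-hand side that, as $\|\theta\|_{\dot H^s}\to\infty$, is $o\big(\|\theta\|_{\dot H^s}^{2+2\al/(4\al-\epsilon)}\big)$. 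Concretely the goal is a bound, valid for $\|\theta\|_{\dot H^s}\ge M_0$, of the shape
\[
\mathcal N\ \le\ \tfrac12\|\theta\|_{\dot H^{s+\al}}^2\ +\ o\!\big(\|\theta\|_{\dot H^s}^{\,2+\frac{2\al}{4\al-\epsilon}}\big)\ +\ C\big(\|\theta_0\|_{L^2},s,\al\big),
\]
and establishing it is the main obstacle of the proof.

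Finally I would assemble everything. For $\|\theta(t)\|_{\dot H^s}\ge M_0$, feeding the nonlinear bound into the energy identity and absorbing the $o(\cdot)$ term and a fraction of the dissipation using the enhanced-dissipation display yields a closed differential inequality of the form
\[
\frac{d}{dt}\|\theta\|_{\dot H^s}^2\ +\ c\,\|\theta\|_{\dot H^s}^{\,2+\frac{2\al}{4\al-\epsilon}}\ \le\ C .
\]
Because the exponent $2+\tfrac{2\al}{4\al-\epsilon}$ is strictly larger than $2$, the dissipative term overwhelms the constant once $\|\theta\|_{\dot H^s}$ exceeds a further threshold $M_1$; hence $y(t)=\|\theta(t)\|_{\dot H^s}^2$ cannot leave the bounded interval $[0,\max(M_0^2,M_1^2,y(0))]$ going forward in time, contradicting $\|\theta(t)\|_{\dot H^s}\to\infty$ at $T_*$. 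With $\|\theta\|_{\dot H^s}$ thus bounded on $(0,T_*)$, the continuation criterion underlying Ju's local theory extends $\theta$ as a strong solution past $T_*$, so $T_*$ was not a blow-up time. Choosing $\epsilon_*$ to be the smallest of the smallness constants that appeared closes the proof.
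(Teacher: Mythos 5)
Your proposal has a genuine gap at its central step, and the step is not merely unproven but very likely unprovable in the form you need. Everything hinges on the asserted bound $\mathcal N\le\tfrac12\|\theta\|_{\dot H^{s+\al}}^2+o\big(\|\theta\|_{\dot H^s}^{\,2+\frac{2\al}{4\al-\epsilon}}\big)+C$, which you yourself flag as ``the main obstacle'' and never establish. The hypothesis of the theorem only constrains frequencies \emph{below} $J(t)$; the high-high-high interactions at frequencies $\geq 2^{J(t)}$ are untouched by the smallness of $\epsilon_*$, and the best available commutator/paraproduct estimates (e.g.\ the Miura-type bound \eqref{ineq.commutator} used in Proposition \ref{PROP}) give a contribution of order $\|\theta\|_{\dot H^s}^3$ from these terms. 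But for $0<\epsilon<\al$ one has $2+\frac{2\al}{4\al-\epsilon}<3$, so the cubic term is \emph{not} $o\big(\|\theta\|_{\dot H^s}^{2+\frac{2\al}{4\al-\epsilon}}\big)$ and cannot be absorbed by the ``enhanced dissipation'' you derive (which, by the way, is correctly computed: the structural consequences in your first display are fine). This is exactly the supercriticality of the problem reasserting itself: at the conjectured Type 1 rate the nonlinearity and the dissipation at frequency $2^{J(t)}$ balance, so a pointwise-in-time absorption argument of the kind you sketch cannot close, and no choice of $\epsilon_*$ fixes it because $\epsilon_*$ never sees the high modes.

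The paper's proof takes a different route precisely to avoid this. It works with escape times: if $T_*$ were a blow-up time, then for every large $M$ there is a last time $t_M$ at which $\|\theta(t_M)\|_{\dot H^s}=M$. The smallness hypothesis $\|\Delta_{<J(t)}\theta\|_{\dot B^{2-2\al}_{2,\infty}}\le\epsilon_*$, combined with the geometric sum $\|\Delta_{<J}\theta\|_{\dot H^s}^2\lesssim \epsilon_*^2\,2^{2J(s-2+2\al)}$ and the specific calibration $2^{(2-s+2\al)J(t)}=C(s,\al)\|\theta(t)\|_{\dot H^s}$, converts into the frequency-sparseness ratio condition \eqref{assumption.freqSparse} of Proposition \ref{PROP} with $\ga=1/2$. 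The proposition then produces a quantified later time $t'$ with $\|\theta(t')\|_{\dot H^s}\le\tfrac12\|\theta(t)\|_{\dot H^s}$, contradicting the escape-time property, so $T_*$ is not a blow-up time. Note how the nonlinearity is handled there: not by absorption into dissipation, but crudely, integrated over the short window $t'-t\sim(\ga/\|\theta(t)\|_{\dot H^s})^{2\al/(s-2+2\al)}$ on which Ju's local theory guarantees the norm at most doubles, so the accumulated Duhamel-type contribution $II$ is $O(t^{(s-2+2\al)/(2\al)}\|\theta\|_{\dot H^s}^2)$ and hence small by the choice of the window, while the smallness of the low modes kills the linear part $I$ at that time. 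If you want to salvage your strategy, you should replace the pointwise differential inequality by this integrated, short-time argument, i.e.\ reduce the theorem to Proposition \ref{PROP} as the paper does.
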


\subsection{Uniqueness criteria}

Non-uniqueness can, in theory, ruin the predictive power of simulations. It is therefore important to understand the severity of a hypothetical non-uniqueness scenario, which can be analyzed by providing growth rates for the error, which is the difference between two solutions, or establishing necessary properties of the error. The latter could be obtained by formulating uniqueness criteria in terms of the error.  It is not obvious what such uniqueness criteria would look like. Some work in this direction has been carried out for the 3D Navier-Stokes equations by the second author \cite{B1}.  Presently, the most likely candidates for non-uniqueness  for 3D Navier-Stokes within a physical class are based on self-similar solutions \cite{GuSv,JS1,JS2,AbC}. The scaling of these solutions implies certain relationships between the scales of the error. Denoting   by $w$ the difference of two self-similar solutions sharing the self-similar initial data $u_0$, we have, for instance, that individual Littlewood-Paley modes vanish as $t\to 0^+$, in particular, \[
\|  \Delta_{< J} w\|_{L^\infty}(t)\lesssim_{{u_0}} 2^{4J} t^{3/2},
\]
provided the two solutions are in a reasonable class \cite{B1}. On the other hand, due to the exact scaling of the error, 
\[
\| w \|_{L^\infty}(t)= t^{-1/2}\|w\|_{L^\infty}(1).
\]
So, as $t\to 0^+$, $\| w_{\geq J}\|_{L^\infty}$ is blowing up while $\|w_{<J}\|_{L^\infty}\to 0$, indicating activity is concentrating on smaller and smaller scales. In other words, in self-similar non-uniqueness for the Navier-Stokes equations the activity in the error would start at infinitesimally small scales and propagate to larger scales as time passes. One goal of \cite{B1} is to extend this to general classes which do not have an exact scaling property. Presently, we pursue this goal for the supercritical surface quasi-geostrophic equations. Note that hypothetical self-similar non-uniqueness has not been thoroughly explored for SQG, although the existence of large self-similar solutions has been developed in the critical case by Albritton and the second author as a preliminary step in this direction \cite{AB2}. Such solutions have not been constructed in the supercritical case. 

We presently adapt some of the ideas of \cite{B1} to supercritical SQG. 
Compared to \cite{B1}, it is again necessary to formulate proofs in terms of energy methods.  We have in mind scenarios where a rough initial data possibly generates multiple solutions which are regular at $t>0$. This is the case for the hypothetical self-similar solutions to the Navier-Stokes equations on which our analysis is inspired. For this reason, our solutions are viewed as possibly non-unique but are also strong solutions for $t>0$---this manifests, e.g., as the assumption that $\|\nabla \theta\|_\I(t)\in L^\I_{loc}( (0,T))$. We stress that they do not belong to a known uniqueness class on the full interval $[0,T]$.

Our first theorem asserts that high modes in the error cannot be subordinate to low modes as $t\to 0^+$, assuming the error is non-zero.
 
\begin{theorem}\label{thrm.uniqueness1}
    Let $\theta_1,\theta_2$ be weak solutions of SQG in the sense of Marchand with the same initial data $\theta_0\in L^q\cap L^2$ for some $q\in (1,\infty)$. Let $p=\frac{2q}{q-1}\in(2,\infty)$, $w=\theta_1-\theta_2$ and suppose that 
    \[
    \partial_t \| w_{<J}\|_{L^2}^2 \leq -2\int ( R^\perp w \cdot \nabla \theta_1 + R^\perp\theta_2  \cdot\nabla w)_{<J} w_{<J}\,dx,
    \]
    where $J\in \Z$ is fixed.
    If, for some time $T>0$, there exists  $C<\I$ such that  
    \[
     \sup_{t\in (0,T)}\frac {\|w_{\geq J}\|_p(t)} {\|w_{<J}\|_p(t)} \leq C,
    \]
    then $w=0$.
\end{theorem}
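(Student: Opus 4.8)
The plan is to close a linear Gr\"onwall inequality for the low-mode energy $E(t):=\|w_{<J}\|_{L^2}^2(t)$ and then propagate the resulting identity $w_{<J}\equiv 0$ to all frequencies using the ratio hypothesis. Two structural facts about Marchand solutions enter as inputs. First, each $\theta_i$ obeys the a priori bounds $\|\theta_i(t)\|_{L^r}\le\|\theta_0\|_{L^r}$ for $r\in\{2,q\}$; since the Riesz transforms are bounded on $L^q$ and on $L^p$ (this is where $1<q<\infty$ and $2<p<\infty$ are used), we get $\sup_t\|R^\perp\theta_i(t)\|_{L^q}\lesssim\|\theta_0\|_{L^q}=:M$ and $\sup_t\|\theta_i(t)\|_{L^2}\le\|\theta_0\|_{L^2}$. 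Second, since $\theta_1$ and $\theta_2$ emanate from the same datum $\theta_0$, the $L^2$ energy inequality combined with weak $L^2$-continuity forces $\theta_i(t)\to\theta_0$ strongly in $L^2$ as $t\to 0^+$, and hence $E(t)\to 0$ as $t\to 0^+$. Finally, the ratio hypothesis and the triangle inequality give $\|w\|_p(t)\le(1+C)\|w_{<J}\|_p(t)$; in particular $w(t)\in L^p$ for $t\in(0,T)$.

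To estimate the right-hand side of the hypothesized inequality, write it as $-2(I_1+I_2)$ with $I_1=\int(R^\perp w\cdot\nabla\theta_1)_{<J}\,w_{<J}\,dx$ and $I_2=\int(R^\perp\theta_2\cdot\nabla w)_{<J}\,w_{<J}\,dx$. There is no control available on $\nabla\theta_i$, so the essential move is to exploit that $R^\perp w$ and $R^\perp\theta_2$ are divergence free: $R^\perp w\cdot\nabla\theta_1=\nabla\cdot(\theta_1\,R^\perp w)$ and $R^\perp\theta_2\cdot\nabla w=\nabla\cdot(w\,R^\perp\theta_2)$. Since $\Delta_{<J}$ is a self-adjoint Fourier projection fixing $w_{<J}$, transferring it off the product and integrating by parts yields
\[
I_1=-\int\theta_1\,R^\perp w\cdot\nabla w_{<J}\,dx,\qquad I_2=-\int w\,R^\perp\theta_2\cdot\nabla w_{<J}\,dx .
\]
Both are controlled by H\"older's inequality with the exponent triple $(q,p,p)$, which is admissible precisely because $p=\tfrac{2q}{q-1}$ satisfies $\tfrac1q+\tfrac2p=1$; using $\|R^\perp w\|_p\lesssim\|w\|_p\le(1+C)\|w_{<J}\|_p$ and $\sup_t\|R^\perp\theta_i\|_q\lesssim M$, this gives $|I_1|+|I_2|\lesssim M(1+C)\,\|w_{<J}\|_p\,\|\nabla w_{<J}\|_p$. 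Because $J$ is \emph{fixed}, $w_{<J}$ is band-limited to frequencies $\lesssim 2^J$, so Bernstein's inequalities return these to $L^2$: $\|w_{<J}\|_p\lesssim 2^{J(1-2/p)}\|w_{<J}\|_{L^2}$ and $\|\nabla w_{<J}\|_p\lesssim 2^{2J(1-1/p)}\|w_{<J}\|_{L^2}$. Altogether $\partial_t E\le K\,E$ on $(0,T)$ for some $K=K(J,q,C,M)$.

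Since $E$ is locally absolutely continuous on $(0,T)$ and $E(0^+)=0$, Gr\"onwall's inequality applied to the non-increasing quantity $e^{-Kt}E(t)$ forces $E\equiv 0$, i.e.\ $w_{<J}\equiv 0$ on $(0,T)$. The ratio hypothesis then yields $\|w_{\ge J}\|_p\le C\,\|w_{<J}\|_p=0$, so $w=w_{<J}+w_{\ge J}\equiv 0$. The step I expect to be the main obstacle is the nonlinear estimate above: one must resist estimating $\nabla\theta_i$ directly and instead move the derivative onto the smooth, band-limited factor $w_{<J}$ via the divergence-free structure, and one must accept constants that blow up as $J\to\infty$, since the argument genuinely concerns a single fixed dyadic threshold. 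A secondary point requiring care is the passage $t\to 0^+$: it is the strong $L^2$-continuity at $t=0$ of Marchand solutions with coinciding data, rather than the mere weak continuity available a priori, that converts the Gr\"onwall bound into the identity $w\equiv 0$.
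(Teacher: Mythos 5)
Your proposal is correct and follows essentially the same route as the paper: the same low-mode energy inequality estimated via H\"older with exponents $(q,p,p)$, the $L^q$ maximum principle and Riesz-transform boundedness, the ratio hypothesis to replace $\|w\|_p$ by $(1+C)\|w_{<J}\|_p$, Bernstein to return to $\|w_{<J}\|_{L^2}$, and Gr\"onwall from $\|w_{<J}\|_{L^2}(0^+)=0$ (a limit the paper leaves implicit but you rightly justify via strong $L^2$ continuity at $t=0$). The only nitpick is that $\Delta_{<J}$ built from the smooth cutoff is self-adjoint but not idempotent, so $\Delta_{<J}w_{<J}\neq w_{<J}$; this is harmless, since keeping $\nabla\Delta_{<J}w_{<J}$ in the last factor and using the uniform $L^p$-boundedness of $\Delta_{<J}$ together with Bernstein yields exactly the same bound.
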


Above, to be concise we are building the properties of the error into our assumptions. It is reasonable to expect these properties can be deduced directly from properties of Marchand's solutions.

Our second theorem quantifies the extent to which low frequencies are necessarily active in general non-uniqueness scenarios. It is in some sense complimentary to the first.

\begin{theorem}\label{thrm.uniqueness2} Fix $p\in (1,\infty)$. 
    Suppose that $\theta_1,\theta_2$ are distributional solutions to SQG on $\R^2\times (0,T)$ with the same initial data such that, letting $w=\theta_1-\theta_2$, we have  that $w\in L^\infty([0,T);L^p)$ with $\lim_{t\to 0^+}\|w\|_{L^p}(t)=0$, the quantity $\partial_t\|w\|_p^p + p\|\Lambda^{2\alpha /p}w\|^p_p$ is locally bounded on $(0,T)$ and
    \[\partial_t\|w\|_p^p + p\|\Lambda^{2\alpha /p}w\|^p_p \leq - C \int R^\perp w \cdot \nabla \theta_1 \cdot |w|^{p-2}w\,dx.
    \]
    Assume further that  $\|\nabla \theta_1\|_q$ is locally bounded for $t\in (0,T)$ for some $q\in (1,\infty]$.  
    There exists a universal constant $c_*$ so that, defining $J(t)$ via the equation
    \[
    2^{2J(t)\al} = c_* \|\nabla \th_1\|_{q}^{\frac{q\al }{q-1}}(t),
    \]
    if there exists $\de>0$ so that for all $t\in (0,\de)$ we have 
    \[
     \frac { \| w_{\leq J(t)} \|_{p}(t) } {\| w_{> J(t)} \|_{p}(t)} \leq c_*,
    \]
    then $w=0$. 
    \begin{comment}
    If there exists   $J=J(t)$ so that, for all $t\in (0,T)$,
    \[
   \frac{ \|w_{\leq J}\|_{L^p}(t)} {\|w_{>J}\|_{L^p}(t)}\leq \left(C\frac{2^{2J(t)\alpha/p}}{\|\nabla \theta_1\|_{L^q}(t)^{\frac{1}{p(1-\si)}}}-1\right) 
    \]
    where $C$ is a universal constant and $\si = 1+1/\al - 1/(q\al)$, then $w=0$.
    \end{comment}
\end{theorem}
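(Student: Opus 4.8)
The plan is to run the assumed weighted $L^p$ energy inequality for the error $w=\theta_1-\theta_2$ and show that the frequency--balance hypothesis turns the fractional dissipation into a term that coercively controls $\|w\|_p$; once this is in place, the fact that $\|w\|_p(t)\to 0$ as $t\to 0^+$ propagates forward. The starting point is
\[
\partial_t\|w\|_p^p + p\|\Lambda^{2\alpha/p}w\|_p^p \;\le\; -C\int R^\perp w\cdot\nabla\theta_1\,|w|^{p-2}w\,dx ,
\]
which, by the standing hypothesis, is an honest differential inequality integrable on compact subintervals of $(0,T)$.

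First I would extract coercivity from the dissipation. Because $\Lambda^{-2\alpha/p}\Delta_{>J}$ is a Fourier multiplier of order $\lesssim 2^{-2\alpha J/p}$ on $L^p$ (Bernstein/Mikhlin), one has $\|w_{>J}\|_p\lesssim 2^{-2\alpha J/p}\|\Lambda^{2\alpha/p}w\|_p$, hence $\|\Lambda^{2\alpha/p}w\|_p^p\gtrsim 2^{2\alpha J}\|w_{>J}\|_p^p$. The hypothesis $\|w_{\le J(t)}\|_p\le c_*\|w_{>J(t)}\|_p$ gives $\|w\|_p\le(1+c_*)\|w_{>J(t)}\|_p$, so
\[
\|\Lambda^{2\alpha/p}w\|_p^p\;\gtrsim\; 2^{2\alpha J(t)}\|w\|_p^p\;\sim\;\|\nabla\theta_1\|_q^{\,q\alpha/(q-1)}\,\|w\|_p^p ,
\]
using the defining relation $2^{2J(t)\alpha}=c_*\|\nabla\theta_1\|_q^{q\alpha/(q-1)}(t)$.

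Next I would estimate the nonlinear term. Hölder with exponents $q$, $p_1$, $p'=p/(p-1)$, where $1/p_1=1/p-1/q$, together with the $L^{p_1}$--boundedness of the Riesz transform, gives a bound $\lesssim\|\nabla\theta_1\|_q\|w\|_{p_1}\|w\|_p^{p-1}$ (for $q=\infty$ one just takes $p_1=p$). I would then split $w=w_{\le J(t)}+w_{>J(t)}$. For the low part, Bernstein yields $\|w_{\le J}\|_{p_1}\lesssim 2^{2J/q}\|w_{\le J}\|_p$ and the hypothesis bounds $\|w_{\le J}\|_p$ by $c_*\|w\|_p$; inserting $J(t)$ produces a contribution $\lesssim c_*\,\|\nabla\theta_1\|_q^{\,q/(q-1)}\|w\|_p^p$. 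For the high part, I would interpolate against the dissipation: the Sobolev embedding $\dot W^{2\alpha/p,p}\hookrightarrow L^{p/(1-\alpha)}$ and the high--frequency gain above give $\|w_{>J}\|_{p_1}\lesssim 2^{-cJ}\|\Lambda^{2\alpha/p}w\|_p$ for some $c=c(p,q,\alpha)>0$ (in a range of $q$ for which the interpolation is admissible; otherwise one interpolates with a lower--integrability norm available for $w$), and Young's inequality absorbs $\|\Lambda^{2\alpha/p}w\|_p$ into $\tfrac p2\|\Lambda^{2\alpha/p}w\|_p^p$.

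Collecting terms yields $\partial_t\|w\|_p^p+\tfrac p2\|\Lambda^{2\alpha/p}w\|_p^p\le G(t)\|w\|_p^p$, with $G$ a finite sum of fixed powers of $\|\nabla\theta_1\|_q(t)$ carrying positive powers of $c_*$; by hypothesis $G$ is locally bounded on $(0,T)$. Using the coercivity of the second step to dominate the low--mode part (which is where $c_*$ must be taken small) and the assumed behaviour $\|w\|_p(t)\to0$ as $t\to0^+$, a Gronwall argument on $(0,\delta)$ forces $w\equiv0$ on a right neighbourhood of $t=0$; a standard open--closed/continuation argument then gives $w\equiv0$ on $(0,T)$. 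I expect the nonlinear estimate to be the main obstacle: since mild--solution bounds are unavailable, the only coercive object is $\|\Lambda^{2\alpha/p}w\|_p^p$, so $w$ must be split at precisely the frequency $J(t)$ dictated by $\|\nabla\theta_1\|_q(t)$ ---so that the low modes are controlled by the hypothesis via Bernstein and the high modes by interpolation with the dissipation--- and one must then check that the powers of $\|\nabla\theta_1\|_q$ and of $c_*$ balance across the two pieces; tracking these exponents, choosing $c_*$ small, and dealing with the non--Hilbertian range $p\neq 2$ and the extreme values of $q$ is the technical heart of the proof.
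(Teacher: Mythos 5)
Your skeleton (the assumed $L^p$ energy inequality, Bernstein coercivity $\|\Lambda^{2\alpha/p}w\|_p^p\gtrsim 2^{2\alpha J}\|w_{>J}\|_p^p$, the hypothesis to replace $\|w\|_p$ by $\|w_{>J}\|_p$, and the final use of $\|w\|_p(t)\to0$) matches the paper, but the step you explicitly defer --- checking that ``the powers of $\|\nabla\theta_1\|_q$ and of $c_*$ balance across the two pieces'' --- is exactly where the argument fails. With your H\"older split $(q,p_1,p/(p-1))$, only one copy of $w$ is measured at the raised exponent $p_1$ with $1/p_1=1/p-1/q$, so the low-frequency contribution is of size $c_*\,2^{2J/q}\|\nabla\theta_1\|_q\|w\|_p^p$; inserting $2^{2\alpha J(t)}=c_*\|\nabla\theta_1\|_q^{q\alpha/(q-1)}(t)$ this is $c_*^{1+1/(q\alpha)}\|\nabla\theta_1\|_q^{q/(q-1)}\|w\|_p^p$, whereas the dissipation retained at the frequency $J(t)$ supplies only about $c_*\|\nabla\theta_1\|_q^{q\alpha/(q-1)}\|w\|_p^p$. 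Since $\alpha<1/2$, the exponent $q/(q-1)$ strictly exceeds $q\alpha/(q-1)$, so no fixed choice of $c_*$ lets the dissipation absorb the low-mode term once $\|\nabla\theta_1\|_q(t)$ is large --- and that is precisely the regime the theorem targets, since $\|\nabla\theta_1\|_q$ is only locally bounded on the open interval $(0,T)$ and is expected to blow up as $t\to0^+$. The Gronwall fallback does not repair this: from $\partial_t\|w\|_p^p\le G(t)\|w\|_p^p$ one gets $\|w(t)\|_p^p\le\|w(\epsilon)\|_p^p\exp\bigl(\int_\epsilon^t G\,ds\bigr)$, and $\|w(\epsilon)\|_p\to0$ gives nothing if $G\notin L^1$ near $0$; if instead $G$ were integrable up to $t=0$, the conclusion would follow from Gronwall alone and the frequency hypothesis (the content of the theorem) would be superfluous.

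The paper closes the estimate differently, in a way that makes the exponents match by construction: the whole nonlinearity is bounded by $\|\nabla\theta_1\|_q\|w\|_i^p$ with $1/q+p/i=1$, i.e.\ all $p$ copies of $w$ sit at the same exponent; then $\|w\|_i$ is interpolated between $\|w\|_{p/(1-\alpha)}\lesssim\|\Lambda^{2\alpha/p}w\|_p$ (Sobolev) and $\|w\|_p$, and Young's inequality absorbs the resulting power of $\|\Lambda^{2\alpha/p}w\|_p$ into the dissipation, leaving $\partial_t\|w\|_p^p+\|\Lambda^{2\alpha/p}w\|_p^p\le C\|\nabla\theta_1\|_q^{\beta}\|w\|_p^p$ with $\beta$ exactly the power used to define $J(t)$. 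The hypothesis then gives $\|w\|_p\le(1+c_*)\|w_{>J}\|_p$, Bernstein gives $\|\Lambda^{2\alpha/p}w\|_p^p\gtrsim2^{2\alpha J}\|w_{>J}\|_p^p$, and the definition of $J(t)$ turns the inequality into $\partial_t\|w\|_p^p\le0$ with no Gronwall coefficient at all, after which $\lim_{t\to0^+}\|w\|_p=0$ forces $w\equiv0$. So the needed fix is structural: do not frequency-split the nonlinearity and pay Bernstein on the low block; instead distribute the dissipative gain over all $p$ factors of $w$ by interpolation so that the surviving coefficient is exactly $\|\nabla\theta_1\|_q^{q\alpha/(q-1)}$, which the choice of $J(t)$ is designed to dominate.
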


Compared to Theorem \ref{thrm.uniqueness1}, the length scale $2^{-J(t)}$ is dynamic. This leads to a stronger conclusion. It is not clear if Theorem \ref{thrm.uniqueness1} can be improved to match this.

\subsection*{Acknowledgments}

The research of Z. Akridge was supported through a research fellowship awarded by the University of Arkansas Office of Undergraduate Research.

\bigskip \noindent 
The research of Z. Bradshaw was supported in part by the NSF grant DMS-2307097 and the Simons Foundation via a TSM grant (formerly called a collaboration grant).  

\section{Preliminaries}

\subsection{Littlewood-Paley}
We refer the reader to \cite{BCD} for an in-depth treatment of Littlewood-Paley and Besov spaces. Let $\lambda_j=2^j$ be an inverse length and let $B_r$ denote the ball of radius $r$ centered at the origin.  Fix a non-negative, radial cut-off function $\chi\in C_0^\infty(B_{1})$ so that $\chi(\xi)=1$ for all $\xi\in B_{1/2}$. Let $\phi(\xi)=\chi(\lambda_1^{-1}\xi)-\chi(\xi)$ and $\phi_j(\xi)=\phi(\lambda_j^{-1})(\xi)$.  Suppose that $u$ is a vector field of tempered distributions and let $\Delta_j u=\mathcal F^{-1}\phi_j*u$ for $j\geq 0$ and $\Delta_{-1}=\mathcal F^{-1}\chi*u$. Then, $u$ can be written as\[u=\sum_{j\geq -1}\Delta_j u.\]
If $\mathcal F^{-1}\phi_j*u\to 0$ as $j\to -\infty$ in the space of tempered distributions, then we define $\dot \Delta_j u = \mathcal F^{-1}\phi_j*u$ and have
\[u=\sum_{j\in \Z}\dot \Delta_j u.\]
We additionally define
\[
\Delta_{<J} f = \sum_{j<J} \dot \Delta _jf;\quad \Delta_{\geq j} f =f- \Delta_{<J} f,
\]
with the obvious modifications for $\Delta_{\leq J}$ and $\Delta_{>J}$. If we do not specify that $J$ is in integer, then we use $\chi(\la_1^{-1} 2^{J} \xi)$ in the definition of $\Delta_{\leq J}$. We will also write $f_j$ for $\dot \Delta_j f$ when it is convenient.

The Littlewood-Paley formalism is commonly used to define Besov spaces. 
We are primarily interested in Besov spaces with infinite summability index, the norms of which are
\begin{align*}
&||u||_{B^s_{p,\infty}}:= \sup_{-1\leq j<\infty } \lambda_j^s ||\Delta_j u ||_{L^p(\R^n)},
\end{align*}
and
\begin{align*}
&||u||_{\dot B^s_{p,\infty}}:= \sup_{-\infty< j<\infty } \lambda_j^s ||\dot \Delta_j u ||_{L^p(\R^n)}.
\end{align*} 
Importantly for our work, $\dot H^s = \dot B^s_{2,2}$ and $H^s = B^s_{2,2}$.

\subsection{Inequalities}

We make use of the following standard inequalities which are recalled for convenience:
\begin{itemize}
    \item \textbf{Log-convexity of $L^p$ norms.} If $0<p_0<p_1<\infty$ and $f\in L^{p_0}\cap L^{p_1}$, then $f\in L^p$ for all $p\in (p_0,p_1)$ and 
    \[
    \|f\|_{p_\theta} \leq \|f\|^{1-\theta}_{p_0}\|f\|^{\theta}_{p_1}
    \]
    where $\frac{1}{p_\theta}=\frac{1-\theta}{p_0} + \frac{\theta}{p_1}$.
    %\item \textbf{Gr\"onwall's inequality.} 
   % \item \textbf{Hardy-Littlewood-Sobolev inequality.} \crr{ADD ME PLEASE}
    \item \textbf{Bernstein's inequalities.} If  $1\leq p\leq q\leq \infty$ and $\al\in \mathbb N^2$, then
    \[
\| D^\alpha \dot \Delta _jf \|_{L^p} \lesssim_{\al,p}   2^{j|\alpha| } \|\dot \Delta _jf\|_{L^p}; \qquad   \|   \dot \Delta _jf \|_{L^p} \lesssim_{p,q}   2^{j (\frac 2 q - \frac 2 p) } \|\dot \Delta _jf \|_{L^q}.
    \]
    Throughout we only use this in a finite number of settings and use $\la$ to indicate a constant that is valid for this finite selection of Bernstein's inequalities. 
    \item \textbf{Commutator estimate.} The following commutator estimate is due to Miura \cite{Miura}. Let $1\leq s < 2$, $t<1$ with $s+t>1$. Then there exists positive constants $C=C(s,t)$ such that
    \begin{equation}
    \label{ineq.commutator}\|[f,\dot \Delta_j]g\|_{L^2} \leq C2^{-(s+t-1)j}c_j\|f\|_{\dot{H}^s}\|g\|_{\dot{H}^t}
    \end{equation}
    holds for $j\in \Z$, $f\in \dot{H}^s$ and $g\in \dot{H}^s$ with $ \sum_{j\in \Z} c_j^2=1$.
\end{itemize}

\section{Proofs of regularity results}

The next proposition shows that a solution will become small at a certain time if activity in $\th_0$ is concentrated at sufficiently small scales interpreted through the Littlewood-Paley. Theorems \ref{thrm.reg1} and \ref{thrm.reg3} are deduced directly from it while Theorem \ref{thrm.reg2} is deduced from Theorem \ref{thrm.reg1}.

\begin{proposition}\label{PROP}
Let $s\in (2-2\al,2-\al)$. Let $\th_0\in \dot H^s \cap L^2$. Let $\th$ be the strong solution defined on $(0,T(\|\th_0\|_{\dot H^s}))$.
    Fix  $\ga \in (0,1)$. 
    Then, there exist $t \sim (\ga \| \th_0\|_{\dot H^s}^{-1} )^{\frac {2\al} {s-(2-2\al)}}\leq 1$ and $J$ with 
    \[ 
2^{2\al J}\geq C \frac {1} {\ga } \bigg( \frac  {  \| \th_0\|_{\dot H^s}} \ga  \bigg)^{\frac {2\al}{ s-(2-2\al)} }.    
    \]for a universal constant $C$,  so that, if 
    \begin{equation}
        \label{assumption.freqSparse}
         \| \Delta_{<J}\th_0 \|_{\dot H^s} \leq \frac {\ga} {4 C_b} \|\Delta_{\geq J}\th_0\|_{\dot H^s},
    \end{equation}
where $C_b$ is another universal constant, then  
    \[\|\theta(t)\|_{\dot H^{s}}\leq \gamma \|\theta_0\|_{\dot H^{s}}.\]
\end{proposition}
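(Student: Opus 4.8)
# Proof proposal for Proposition \ref{PROP}

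The plan is to derive a differential inequality for the energy of each high-mode piece $\| \dot\Delta_j \th\|_{L^2}^2$, use the dissipation term $\sim 2^{2\al j}\|\dot\Delta_j\th\|_{L^2}^2$ to beat the nonlinear transfer term on the relevant time scale, and then sum in $j$ after weighting by $2^{2sj}$. First I would invoke Ju's local well-posedness to know that on $(0,T_{min})$ the solution exists, is smooth, and satisfies $\|\th(t)\|_{\dot H^s}\le 2\|\th_0\|_{\dot H^s}$; this gives us an a priori bound to feed into the nonlinear estimates, and it guarantees we may perform the Littlewood-Paley energy computations rigorously. Fix the threshold $J$ as prescribed and split $\th = \Delta_{<J}\th + \Delta_{\ge J}\th$. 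The strategy is to track the two pieces separately: the high piece $\Delta_{\ge J}\th$ should decay because every mode in it is dissipated at rate at least $2^{2\al J}$, which by the choice of $J$ is large; the low piece $\Delta_{<J}\th$ starts small by \eqref{assumption.freqSparse} and can only grow at a controlled algebraic rate coming from the nonlinearity.

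The key computation is the localized energy identity. Applying $\dot\Delta_j$ to \eqref{eq.SQG}, pairing with $2^{2sj}\dot\Delta_j\th$, and summing over $j\ge J$ (and separately over $j<J$), one gets
\[
\frac12\partial_t \|\Delta_{\ge J}\th\|_{\dot H^s}^2 + \sum_{j\ge J}2^{2sj}\|\Lambda^\al\dot\Delta_j\th\|_{L^2}^2 = -\sum_{j\ge J}2^{2sj}\int \dot\Delta_j(u\cdot\nabla\th)\,\dot\Delta_j\th\,dx.
\]
For the nonlinear term I would use the Miura commutator estimate \eqref{ineq.commutator} together with the divergence-free structure of $u$: writing $\dot\Delta_j(u\cdot\nabla\th) = [\dot\Delta_j, u\cdot\nabla]\th + u\cdot\nabla\dot\Delta_j\th$, the second piece integrates to zero against $\dot\Delta_j\th$, and the commutator is controlled in $L^2$ by $2^{-(s+t-1)j}c_j\|u\|_{\dot H^s}\|\th\|_{\dot H^t}$ for an appropriate auxiliary regularity $t$ (chosen in $(1-s,1)$, which is nonempty since $s>2-2\al>1$ forces... actually here I need $s<2$, and I will pick $t$ slightly below $1$ so that $s+t>1$ and $t<1$; note $\|u\|_{\dot H^s}\lesssim\|\th\|_{\dot H^s}$ by boundedness of Riesz transforms). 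After using Bernstein to trade the loss $2^{-(s+t-1)j}$ against part of the dissipation $2^{2\al j}$, and the a priori bound $\|\th\|_{\dot H^t}\lesssim\|\th_0\|_{\dot H^s}$ (interpolating $\dot H^t$ between $L^2$ and $\dot H^s$, using $t<s$ — this is where $L^2$ control enters), the nonlinear contribution is absorbed or bounded by $C\|\th_0\|_{\dot H^s}\|\Delta_{\ge J}\th\|_{\dot H^s}^2$ plus a cross term involving $\|\Delta_{<J}\th\|_{\dot H^s}$. Using $2^{2\al J}\gg \|\th_0\|_{\dot H^s}^{2\al/(s-2+2\al)}/\ga\cdots$, the dissipation dominates the diagonal nonlinear term, yielding a Grönwall inequality of the form $\partial_t \|\Delta_{\ge J}\th\|_{\dot H^s}^2 \le -c\,2^{2\al J}\|\Delta_{\ge J}\th\|_{\dot H^s}^2 + (\text{small, coming from the low piece})$, so $\|\Delta_{\ge J}\th(t)\|_{\dot H^s}$ decays essentially like $e^{-c2^{2\al J}t}\|\th_0\|_{\dot H^s}$ up to a perturbation. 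Meanwhile for the low piece, a parallel but cruder estimate gives $\partial_t\|\Delta_{<J}\th\|_{\dot H^s}\le C\|\th_0\|_{\dot H^s}\|\th\|_{\dot H^s}$ plus a term $C_b 2^{(2-s+2\al-\text{stuff})J}\cdots$ — honestly here the mechanism is that the low modes grow at most linearly in $t$ with rate $\lesssim \|\th_0\|_{\dot H^s}^{1+\cdots}$, starting from their initially small value $\tfrac{\ga}{4C_b}\|\Delta_{\ge J}\th_0\|_{\dot H^s}$.

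Combining: at the time $t\sim(\ga\|\th_0\|_{\dot H^s}^{-1})^{2\al/(s-2+2\al)}$ we have $2^{2\al J}t\gtrsim \log(1/\ga)$ (by the lower bound imposed on $2^{2\al J}$), so the high piece has decayed: $\|\Delta_{\ge J}\th(t)\|_{\dot H^s}\le \tfrac\ga4\|\th_0\|_{\dot H^s}$, say; and the low piece, having started at $\le\tfrac\ga{4C_b}\|\th_0\|_{\dot H^s}$ and grown by at most a further $\tfrac\ga4\|\th_0\|_{\dot H^s}$ over this short interval (this is the constraint that pins down the exponent $1+\frac{s-2+s\al}{2\al}$ in $T_*$, and the precise power of $\|\th_0\|_{\dot H^s}$), stays $\le\tfrac\ga2\|\th_0\|_{\dot H^s}$. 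Adding the two gives $\|\th(t)\|_{\dot H^s}\le\ga\|\th_0\|_{\dot H^s}$ as claimed; implicit here is a continuity/bootstrap argument ensuring $\|\th(s)\|_{\dot H^s}$ does not exceed $2\|\th_0\|_{\dot H^s}$ on $[0,t]$ so that all the a priori bounds remain valid, which works because $t<T_{min}$.

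I expect the main obstacle to be the nonlinear term in the high-mode energy estimate: one must extract a genuinely dissipation-dominated inequality rather than merely a Grönwall bound with growing exponential, and this requires carefully separating the high-high, high-low, and low-high frequency interactions so that the only term competing with $2^{2\al J}$ is controlled by $\|\th_0\|_{\dot H^s}$ (which is $\ll 2^{2\al J}$ by the choice of $J$), while the interactions that see the low modes are genuinely small because $\|\Delta_{<J}\th_0\|_{\dot H^s}$ is small and remains small on the time scale considered. Getting the commutator estimate \eqref{ineq.commutator} to cooperate with the Littlewood-Paley truncation at level $J$ — in particular checking that the summation in $j\ge J$ converges and produces the stated power of $2^{2\al J}$ — is the technically delicate point, and is presumably exactly the ``correct mechanism'' the authors refer to in the discussion preceding the proposition.
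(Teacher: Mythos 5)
Your setup (the per-mode energy identity, the splitting $\dot\Delta_j(u\cdot\nabla\theta)=[\dot\Delta_j,u\cdot\nabla]\theta+u\cdot\nabla\dot\Delta_j\theta$, Miura's estimate \eqref{ineq.commutator} with $g=\nabla\theta\in\dot H^{s-1}$, and Ju's bound $\|\theta(t)\|_{\dot H^s}\le 2\|\theta_0\|_{\dot H^s}$ on $(0,T_{min})$) matches the paper, but the step where you diverge is exactly where the argument fails to close. You want a block Gr\"onwall inequality $\partial_t\|\Delta_{\ge J}\theta\|_{\dot H^s}^2\le -c\,2^{2\alpha J}\|\Delta_{\ge J}\theta\|_{\dot H^s}^2+(\text{small})$, i.e.\ to absorb the nonlinear forcing into the dissipation. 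Run the bookkeeping: with Miura's estimate the mode-$j$ forcing, after weighting by $2^{sj}$, is $Cc_j2^{(2-s)j}\|\theta\|_{\dot H^s}^2$; absorbing it into $\lambda 2^{2\alpha j}\bigl(2^{sj}\|\theta_j\|_2\bigr)^2$ by Young's inequality leaves a remainder $\sim c_j^2\,2^{(2(2-s)-2\alpha)j}\|\theta\|_{\dot H^s}^4$, and $2(2-s)-2\alpha>0$ precisely because $s<2-\alpha$, which is the range of the proposition. Since the $c_j$ are only $\ell^2$-normalized (and time dependent), $\sum_{j\ge J}c_j^2 2^{(2(2-s)-2\alpha)j}$ has no uniform bound, so no dissipation-dominated inequality for the high block follows from the stated tools; this is the supercriticality of the problem (the nonlinearity loses a full derivative while dissipation regains only $2\alpha<1$), and the finer high-high/high-low/low-high analysis you defer to is exactly the unfinished, nonroutine part. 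The companion estimates are also off quantitatively: the crude ``rate $\times\, t$'' bound for the low block gives growth $\lesssim 2^{(2-s)J}\|\theta_0\|_{\dot H^s}^2 t\sim \gamma^{(s-2+2\alpha)/(2\alpha)}\|\theta_0\|_{\dot H^s}$ at the stated $t$ and $J$, which exceeds $\gamma\|\theta_0\|_{\dot H^s}$, and the decay you invoke for the high block requires $2^{2\alpha J}t\gtrsim 1/\gamma$ (as in the statement), not merely $\log(1/\gamma)$.

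The paper closes the argument without any absorption and without splitting the nonlinearity by blocks, and this is the idea you are missing. Each mode's ODE is integrated with the factor $e^{\lambda 2^{2\alpha j}t}$, giving $2^{sj}\|\theta_j(t)\|_2\le 2^{sj}e^{-\lambda 2^{2\alpha j}t}\|\theta_j(0)\|_2+Cc_j2^{(2-s)j}\int_0^t e^{-\lambda 2^{2\alpha j}(t-\tau)}\|\theta(\tau)\|_{\dot H^s}^2\,d\tau$, and then the $\ell^2_j$ norm is taken. Only the initial-data term is split at $J$: the $j\ge J$ part is small because the summed exponentials yield an algebraic factor $\sim(2^{2\alpha J}t)^{-1}$, while the $j<J$ part is small by \eqref{assumption.freqSparse}. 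The Duhamel term is estimated over all $j$ at once: using Ju's a priori bound and the racetrack inequality $1-e^{-x}\le x^{(s-2+2\alpha)/(2\alpha)}$, the frequency factors $2^{(2-s-2\alpha)j}\cdot 2^{(s-2+2\alpha)j}$ cancel exactly, leaving $II\lesssim \|\theta_0\|_{\dot H^s}^2\,t^{(s-2+2\alpha)/(2\alpha)}$, which is made $\le\tfrac\gamma2\|\theta_0\|_{\dot H^s}$ simply by the choice of $t$. As proposed, your argument rests on a central inequality that is not available in the range $s\in(2-2\alpha,2-\alpha)$, so there is a genuine gap.
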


\begin{proof}
    Our argument, which is based on energy estimates, follows the beginning of an estimate of Miura \cite{Miura}. 
    Apply $\dot\Delta_j$ to SQG to obtain
    \begin{equation*}
        \partial_t\theta_j + (-\Delta)^{\alpha}\theta_j = -\dot\Delta_j(u\cdot \nabla \theta).
    \end{equation*}
    Adding $u\cdot \nabla \dot\Delta_j \theta$ we get
    \begin{equation*}
        \partial_t\theta_j + (-\Delta)^{\alpha}\theta_j + u\cdot \nabla   \theta_j = [u,\dot\Delta_j]\nabla \theta,
    \end{equation*}
    where $[u,\dot\Delta_j]$ denotes the commutator.
    Now taking the inner product with $\theta_j$ and using Bernstein's inequality (which involves the constant $\la$ appearing below) we get that
\[
   \frac{1}{2}\frac{d}{dt}\|\theta_j\|_{L^2}^2 + \lambda 2^{2\alpha j}\|\theta_j\|^2_{L^2} + (u\cdot \nabla\Delta_j \theta,\theta_j) = ([u,\Delta_j]\nabla\theta,\theta_j).
\]
From this we infer
\[
 \frac{1}{2}\frac{d}{dt}\|\theta_j\|_{L^2}^2 + \lambda 2^{2\alpha j}\|\theta_j\|^2_{L^2} \leq \|[u,\Delta_j]\nabla\theta\|_{L^2}\|\theta_j\|_{L^2},
\]
and upon dividing through by $\|\theta_j\|_{L^2}$,
\[
  \frac{1}{2}\frac{d}{dt}\|\theta_j\|_{L^2} + \lambda 2^{2\alpha j}\|\theta_j\|_{L^2} \leq \|[u,\Delta_j]\nabla\theta\|_{L^2}.
\]
We now apply the commutator estimate \eqref{ineq.commutator} to obtain  
    \begin{equation*}
    \begin{split}
        \frac{1}{2}\frac{d}{dt}\|\theta_j\|_{L^2} + \lambda 2^{2\alpha j}\|\theta_j\|_{L^2} & \leq \|[u,\Delta_j]\nabla\theta\|_{L^2}\\
        & \leq Cc_j2^{-(2s-2)j}\|u\|_{\dot{H}^{s}}\|\nabla \theta\|_{\dot{H}^{s-1}} \\
        & \leq Cc_j2^{-(2s-2)j}\|\theta\|_{\dot{H}^{s}}^2.
    \end{split}
    \end{equation*}
    Observe that 
    \[
    \frac d {dt} (e^{\la 2^{2\al j}t }\|\theta_j\|_{L^2}(t) ) = e^{\la 2^{2\al j}t } (\frac{d}{dt}\|\theta_j\|_{L^2} + 2\lambda 2^{2\alpha j}\|\theta_j\|_{L^2}  ).
    \]
    Integrating in time   we therefore obtain
    \begin{equation*}
        \|\theta_j\|_{L^2}(t) \leq e^{-2^{2\alpha j}\lambda t}\|\theta_j(0)\|_{L^2} + Cc_j2^{-(2s-2)j}\int_0^te^{-2^{2\alpha j}\lambda (t-s)}\|\theta(s)\|_{\dot H^{s}}^2 \, ds.
    \end{equation*}
    %Where we have switched to the inhomogeneous space. 
    Now multiplying by $2^{sj}$ and  applying the  $l^2(\Z)$-norm with respect to $j$, we get that
    \begin{equation*}
        \|\theta(t)\|_{\dot H^{s}} \leq I + II,
    \end{equation*}
    where
    \begin{equation*}
        I = \left(\sum_{j\in \Z}\left(2^{sj}e^{-2^{2\alpha j}\la t}\|\theta_j(0)\|_{L^2}\right)^2\right)^{1/2},
        \end{equation*}
and
        \begin{equation*}
          II = \left(\sum_{j\in \Z}\left(    Cc_j2^{-sj +2j}\int_0^te^{-2^{2\alpha j}\lambda (t-s)}\|\theta(s)\|_{\dot H^{s}}^2 \, ds\right)^2\right)^{1/2}.
    \end{equation*}
    First we estimate $I$. We will do this for an arbitrary time $t$. We will then give a condition on  $t$ when we bound $II$.  We start by writing 
    \begin{equation*}
       I  \leq I_a+I_b,
    \end{equation*}
    where
    \begin{equation*} 
       I_a  = \left(\sum_{j\geq J}\left(2^{sj}e^{-2^{2\alpha j}\lambda t}\|\theta_j(0)\|_{L^2}\right)^2\right)^{1/2}  \text{ and }
       I_b  = \left(\sum_{j<J}\left(2^{sj}e^{-2^{2\alpha j}\lambda t}\|\theta_j(0)\|_{L^2}\right)^2\right)^{1/2}.
    \end{equation*}
    We evaluate $I_a$ as
    \begin{equation*}
    \begin{split}
        \left(\sum_{j\geq J}\left(2^{sj}e^{-2^{2\alpha j}\lambda t}\|\theta_j(0)\|_{L^2}\right)^2\right)^{1/2} \leq \|\theta_{\geq J}(0)\|_{\dot H^{s}}\left(\sum_{j\geq J}\left(e^{-2^{2\alpha j}\lambda t}\right)^2\right)^{1/2}
 %   
%        \leq \|\theta(0)\|_{H^{2-2\alpha + \epsilon}}\left(\sum_{j\geq J}e^{-2^{2\alpha j+1} \lambda t}\right)^{1/2} \leq \|\theta(0)\|_{H^{2-2\alpha + \epsilon}}\left(\sum_{j\geq J}\frac{1}{2^{2\alpha j+1}t}\right)^{1/2}
\leq \frac{C}{2^{2\alpha J + 1}t}\|\theta_{\geq J}(0)\|_{\dot H^{s}}.
    \end{split}
    \end{equation*}
   We may choose $J=J(t)$ according to $2^{2\alpha J}\geq 2C/(\ga t)$ so that
    \begin{equation}\label{ineq.Ia}
        I_a\leq \frac{\gamma}{4}\|\theta(0)\|_{\dot H^{s}}.
    \end{equation}
    %%Alarm bells, there should be a constant here.
    For the $I_b$, since $e^{-2^{2\alpha j}\lambda t} \leq 1$ for all $j$ and $t$, we simply have 
    \begin{equation}\label{ineq.Ib}
    \begin{split}
        I_b & \leq \left(\sum_{j<J}\left(2^{sj}e^{-2^{2\alpha j}\lambda t}\|\theta_j(0)\|_{L^2}\right)^2\right)^{1/2} \leq C_b\|\Delta_{<J}\theta_0\|_{\dot H^{s}} \leq \frac \ga 4 \|\Delta_{\geq J}\th_0 \|_{\dot H^{s}} ,
        %\\ & \leq C_b\beta\|\theta_0\|_{H^{2-2\alpha + \epsilon}} = \frac{\gamma}{4}\|\theta_0\|_{H^{2-2\alpha + \epsilon}} 
    \end{split}
    \end{equation}
    where we have used the assumption \eqref{assumption.freqSparse}.
    Putting this all together we get that
    \begin{equation} \label{estForI.final}
        I\leq \frac{\gamma}{2} \|\theta_0\|_{\dot H^{s}},
    \end{equation}
    where $t$ is as of yet unspecified and $J=J(t)$.

    For the time-integrated term $II$, we have for $0<s<t$ that
    \[\|\theta(s)\|^2_{H^{s}}\leq 2\|\theta_0\|^2_{H^{s}},%\left [1-C\frac{t\alpha}{\epsilon}\|\Lambda^s\theta_0\|_{L^2}^\frac{\alpha}{\epsilon} \right ]^\frac{-\epsilon}{\alpha}
    \]
    because we are taking $t<T_{min}$ 
    and so  have that
   \[
    II \leq 2\|\theta_0\|_{\dot H^{s}}^2\left(\sum_{j\in \Z}\left(    Cc_j2^{-sj +2j}\int_0^te^{-2^{2\alpha j}\lambda (t-s)} \, ds\right)^2\right)^{1/2}.
    \]
    Evaluating the time integral yields
    \[
    II \leq C\|\theta_0\|_{\dot H^{s}}^2\left(\sum_{j\in \Z}\left(    c_j2^{-sj +2j}\frac 1 {\la 2^{2\al j}} (1-e^{-2^{2\al j}\la t})\right)^2\right)^{1/2}.
    \]
    Observe that by the race-track principle,  $1-e^{-x}\leq x^{(s-(2-2\al))/(2\al)}$ provided $s<2$. Hence
    \begin{align*}
        II &\leq C\|\theta_0\|_{\dot H^{s}}^2\left(\sum_{j\in \Z}\left(    c_j   2^{-sj +2j-2\al j} (2^{2\al j} \la t)^{\frac {s-(2-2\al)} {2\al}}   \right)^2\right)^{1/2}
        \\&\leq C\|\theta_0\|_{\dot H^{s}}^2 t^{(s- (2-2\al))/(2\al)} \| c_j\|_{l^2(\Z)} 
        \\&\leq  C\|\theta_0\|_{\dot H^{s}}^2 t^{(s- (2-2\al))/(2\al)}.
    \end{align*}

To conclude the proof we first choose 
\[
    t= \bigg( \frac \ga {2C \| \th_0\|_{\dot H^s}}   \bigg)^{\frac {2\al}{ s-(2-2\al)}}    ,
\]to ensure 
    \begin{equation}\label{ineq.II}
         II\leq \frac \ga 2 \|\theta_0\|_{\dot H^{s}}.
    \end{equation}
    We then choose $J$   so that \eqref{estForI.final} holds, namely $2^{2\al J}\geq 2C/(\ga t)$. This amounts to having
 \[
2^{2\al J}\geq \frac {2C} {\ga } \bigg( \frac  {2C \| \th_0\|_{\dot H^s}} \ga  \bigg)^{\frac {2\al}{ s-(2-2\al)} }.  
\]

\end{proof}

\bigskip

\begin{proof}[Proof of Theorem \ref{thrm.reg1}]
Note that $T_*\geq T_{min}$ by assumption.  Let $\ga =( T_{min}/T_*)^{(s-2+2\al)/(2\al)}$. If we can show that there exists $t\in (0,T_{min})$ so that 
\[
\| \th (t)\|_{\dot H^s}\leq \bigg( \frac {C_0(s,\al)} {T_*} \bigg)^{\frac {s-2 +2\al}{2\al}},  
\]
then we can solve for a strong solution extending $\th$ on $[t,t+T_*]$ with 
\[
\| \th(s)\|_{\dot H^s}\leq 2 \| \th(t)\|_{\dot H^s}, 
\]
for $s\in (t,t+T_*)$. We have by assumption 
\[
\bigg( \frac {C_0(s,\al)} {T_*} \bigg)^{\frac {s-2 +2\al}{2\al}} = \ga \bigg( \frac {C_0(s,\al)} {T_{min}} \bigg)^{\frac {s-2 +2\al}{2\al}} = \|\th_0\|_{\dot H^s}.
\]
The existence of such a time $t$ follows by Proposition \ref{PROP} under the assumptions of Theorem \ref{thrm.reg1} where we take
\[
2^{2\al J}\geq \frac {2C} {\ga } \bigg( \frac  {2C \| \th_0\|_{\dot H^s}} \ga  \bigg)^{\frac {2\al}{ s-(2-2\al)} }.  
\]     
This holds provided
\[
2^{2\al J} \geq C \|\th_0\|_{\dot H^s}^{1 + \frac {4\al} {s-2+2\al}} T_*^{1+\frac {s-2+s\al}{2\al}},
\]
where the constant  $C$ has replaced those appearing above.  
 
\end{proof}

\begin{proof}[Proof of Theorem \ref{thrm.reg2}]
   This follows by setting $T_*=T_{max}-t$ and applying Theorem \ref{thrm.reg1}. 
\end{proof}

\begin{proof}[Proof of Theorem \ref{thrm.reg3}]
Assume $u\in C((0,T_*);\dot H^s )$ and $T_*$ is a blow-up time. Then, for any $M>0$ there exists $t_M$ so that $\| \th(t_M)\|_{\dot H^s} = M$ and $\| \th(t)\|_{\dot H^s} > M$ for $t\in (t_M,T_*)$. These are called \textit{escape times}.   We will show that smallness of $\| \th \|_{L^\I(0,T_*;\dot B^{2-2\al}_{2,\I})}^2$ implies there are no escape times near $T_*$, indicating $T_*$ is not a blow-up time.

We have for $t$ close enough to $T_*$ that
 \[\big( C_0(s,\al)(T^*-t) \big)^{\frac {s-2+s\al} {2\al}}< \|\theta(t)\|_{H^s},
 \]
or else $T_*$ is not a blow-up time. 

Let $\ga=1/2$. We have for any $t$
\[
\| \Delta_{<J} \th(t)\|_{\dot H^s}^2 \leq \sum_{j<J} 2^{2js} \| \Delta_j \th (t_M)\|_{L^2}^2\lesssim  \| \Delta_{<J}\th \|_{ \dot B^{2-2\al}_{2,\I}}^2 2^{2J(s-2+2\al)}.
\]
Now choose $t'$ so that
\[
t' -t = \bigg(   \frac 1 {4C \|\th(t)\|_{\dot H^s}} \bigg)^{2\al /(s-2+2\al)}.
\]
In the proof of Proposition \ref{PROP} it is clear that the condition on $J$ in Proposition \ref{PROP} holds if
\[
2^{2\al J}\geq \frac {2C}{\ga (t'-t)} = \frac {2C}{\ga  }\big(    {4C \|\th(t)\|_{\dot H^s}} \big)^{2\al /(s-2+2\al)}.
\]
Define $J=J(t)$ so that equality holds in the preceding inequality.
Then 
\[
2^{(2-s+2\al)J} = C(s,\al) \|\th(t)\|_{\dot H^s},
\]
and so
\[
\| \Delta_{<J(t)} \th(t)\|_{\dot H^s}^2  \leq C(s,\al) \| \th \|_{L^\I(0,T_*;\dot B^{2-2\al}_{2,\I})}^2 \|\Delta_{<J(t)}\th(t)\|_{\dot H^s}^2.
\]
Assuming $\| \th \|_{L^\I(0,T_*;\dot B^{2-2\al}_{2,\I})}^2<1$, this implies that
\[
\| \Delta_{<J(t)} \th(t)\|_{\dot H^s}^2  \leq C(s,\al)  \frac {\|\Delta_{<J(t)} \th \|_{L^\I(0,T_*;\dot B^{2-2\al}_{2,\I})}^2} {1-\|\Delta_{<J(t)} \th \|_{L^\I(0,T_*;\dot B^{2-2\al}_{2,\I})}^2} \|\Delta_{\geq J}\th(t)\|_{\dot H^s}^2.
\]
We now require $\|\Delta_{<J(t)} \th(t) \|_{L^\I(0,T_*;\dot B^{2-2\al}_{2,\I})}^2 $ to be small enough that 
\[
C(s,\al)  \frac {\|\Delta_{<J(t)} \th \|_{L^\I(0,T_*;\dot B^{2-2\al}_{2,\I})}^2} {1-\| \Delta_{<J(t)}\th \|_{L^\I(0,T_*;\dot B^{2-2\al}_{2,\I})}^2} \leq \frac 1 {64 C_b^2},
\]
which implies 
  \[
\| \Delta_{<J(t)} \th(t)\|_{\dot H^s}^2  \leq  \frac 1 {\ga^2 16 C_b^2}  \|\Delta_{\geq J(t)}\th(t)\|_{\dot H^s}^2.
\]
Applying Proposition \ref{PROP} we now see that 
\[
\| \th(t')\|_{\dot H^s}\leq \frac 1 2 \|\th(t)\|_{\dot H^s},
\]
indicating $t$ is not an escape time. 
 
\end{proof}

\section{Proofs of uniqueness criteria}

\begin{proof}[Proof of Theorem \ref{thrm.uniqueness1}]

        We start with the following inequality 
   \begin{align*}
    \frac{1}{2}\partial_t\|w_{<J}\|_2^2 + \|\La^{\al} w_{<J}\|_2^2  \,dx&\leq C_1\|w\|_q\| w\|_p\|\nabla w_{<J}\|_p + C_2\|\th_1\|_q\| w\|_p\|\nabla w_{<J}\|_p \\&\leq C2^{J}\|\th_0\|_q\|w\|_p\|  w_{<J}\|_p,
    \end{align*}
    where we require 
    \[
        1 = \frac 2 p + \frac 1 q,
    \]
    with $2<p$ so that $q<\I$ and have used the maximum principle to bound $\th_1$ in terms of $\th_0$.
    From the assumption we have that
    \[
    \|w\|_p\leq \|w_{\geq J}\|_p + \|w_{<J}\|_p \leq (1+C)\|w_{<J}\|_p.
    \]
    Applying this we get
    \[
     \partial_t\|w_{<J}\|_2^2\leq (1+C )2^{J}\|\th_0\|_q\| w_{<J}\|_p^2
    \]
    Then using Bernstein's  inequality we get that
    \[
     \partial_t\|w_{<J}\|_2^2\leq C(1+C)2^{J+ J(2-\frac 4 p)}\|\th_0\|_q\| w_{<J}\|_2^2.
    \]
    Using Gr\"onwall's inequality we see that $w=0$.
\end{proof}

\begin{comment}

ALTERNATIVE PROOF 
\begin{proof}[Proof of Theorem 1.4]
        We start with the following inequality 
    \begin{align*}
    \frac{1}{2}\partial_t\|w_{<J}\|_2^2 + \|\La^{\al} w_{<J}\|_2^2  \,dx
   & \leq C_1\|w\|_q\| w\|_p\|\nabla w_{<J}\|_p + C_2\|\th_1\|_q\| w\|_p\|\nabla w_{<J}\|_p \\ &\leq C2^{J(1-\al)}\|\th_0\|_q\|w\|_p\| \La^{\al}w_{<J}\|_p,
    \end{align*}
    where we require 
    \[
        1 = \frac 2 p + \frac 1 q,
    \]
    with $2<p$ and have used the maximum principle to bound $\th_1$ in terms of $\th_0$.
    From the assumption we have that
    \[
    \|w\|_p\leq \|w_{\geq J}\|_p + \|w_{<J}\|_p \leq (1+C)\|w_{<J}\|_p.
    \]
    Applying this and using Young's multiplicative inequality and Bernstein's inequality we get
    \[
     \partial_t\|w_{<J}\|_2^2\leq C 2^{2J(1-\al)}\|\th_0\|_q^2\| w_{<J}\|_p^2 \leq C 2^{2J(1-\al)} 2^{2(1-2/p)J}\|\th_0\|_q^2\| w_{<J}\|_2^2.
    \]
    Using Gronwall's inequality we see that $w=0$ provided $2^{4J -2\al J -4J/p }  \in L^1(0,T)$.
    Above we required $p>2$. We anticipated getting $2^{2J(t) \al}$ above. We get this if 
    \[
    4J-\al J - 4J/p = 2J \al \iff 4- \frac 4 p = 3\al.
    \]
    If $\al = 1$ we would need $p=4$ (and $q=2$). If $\al = 0$ we would need $p=1$, which is not allowed. Indeed, we need $p>2$ and so $3\al >2$, i.e. $\al>2/3$. 
   \crr{This is sub-optimal because $2^{2J(1-\al)} > 2^J$ when $1>\al$. Hence using diffusion doesn't help.}
    
\end{proof}

\bigskip
\end{comment}

\begin{proof}[Proof of Theorem \ref{thrm.uniqueness2}]
Consider $q<\I$.
Based on our assumptions we have
\[\partial_t\|w\|_p^p + p\|\Lambda^{2\alpha /p}w\|^p_p \leq C \int |R^\perp w \cdot \nabla \th_1 \cdot w^{p-1}| \leq C\|\nabla \th_1\|_q \|w\|_{i} \|w^{p-1}\|_{\frac{i}{p-1}} \leq C\|\nabla \th_1\|_q \|w\|_{i}^{p-1},\]
where $\frac{1}{q}+\frac{p}{i}=1$. Now using the Riesz–Thorin interpolation inequality we get
\[
C\|\nabla w\|_q\|w\|_{i}^p \leq C\|\nabla \th_1 \|_q\|w\|_{p_1}^{p\si}\|w\|_{p_2}^{p(1-\si)},
\]
where we choose $p_2=p$ and $p_1=\frac{p}{1-\alpha}$ and $\si = 1+1/\al - 1/(q\al)$ so that we can use the Hardy-Littlewood-Sobolev lemma to conclude that
\[
C\|\nabla \th_1\|_q\|w\|_{p_1}^{p\si}\|w\|_{p_2}^{p(1-\si)} \leq C \|\nabla \th_1\|_q\|\Lambda^{2\alpha /p}w\|_{p}^{p\si}\|w\|_{p}^{p(1-\si)}.
\]
Now we use Young's product inequality to get
\[
C \|\nabla \th_1\|_q\|\Lambda^{2\alpha /p}w\|_{p}^{p\si}\|w\|_{p}^{p(1-\si)} \leq C\|\nabla \th_1\|_q^{\frac{1}{1-\si}}\|w\|_p^p + \|\Lambda^{2\alpha /p}w\|_{p}^p.
\]
Putting this all together we get that
\[
\partial_t\|w\|_p^p + \|\Lambda^{2\alpha /p}w\|^p_p \leq C\|\nabla \th_1\|_q^{\frac{1}{1-\si}}\|w\|_p^p.
\]
When $q=\I$ arriving at this point is trivial.

By Bernstein's inequality, this  and our assumption imply that
\begin{equation}\label{ineq.aa}
\partial_t\|w\|_p^p+2^{2J\al }\|w_{>J}\|_p^p\leq C\|\nabla \th_1\|_q^{\frac{1}{1-\si}}\|w\|_{p}^{p} \leq  
C\|\nabla \th_1\|_q^{\frac{q\al }{q-1}}\|w_{>J}\|_p^p,
\end{equation}
for $t\in (0,T)$ where we used  our assumption that 
\[
\|w_{\leq J}\|_{p}\leq C \|w_{>J}\|_p
\] 
Now choosing $J(t)$ so that 
\[
 C\|\nabla \th_1\|_q^{\frac{q\al }{q-1}} (t)\leq 2^{2J(t)\al},
\]
implies that 
\[
 \partial_t \|w\|_p^p \leq 0,
\]
and, therefore, $w=0$.
 
\end{proof}

\subsection*{Conflict of interest statement}On behalf of all authors, the corresponding author states that there is no conflict of interest.

	\bibliographystyle{plain}
	\bibliography{main}

\bigskip\noindent 
Zachary Bradshaw, Department of Mathematical Sciences, University of Arkansas, Fayetteville, AR,  USA;
e-mail: \url{zb002@uark.edu}

\medskip\noindent 
Zachary Akridge, Department of Mathematical Sciences, University of Arkansas, Fayetteville, AR,  USA;
 e-mail: \url{zrakridg@uark.edu}

\end{document}